\DeclareMathOperator*\id{id}%
\newtheorem{thm}{Theorem}[section]
\newtheorem{prop}{Proposition}[section]
\newtheorem{cor}{Corollary}[section]
\newtheorem{rem}{Remark}[section]
\newcommand{\To}{\longrightarrow}
\def\1{\^{\i}}
\def\2{\u{a}}
\def\3{\c{s}}
\def\4{\^{a}}
\def\5{\c{t}}
\def\la{\langle}
\def\ra{\rangle}
\begin{document}
\title{Applications of general variational inequalities to coincidence point results}
\author{Alireza Amini-Harandi$^\dag$}
        \address{Department of Pure Mathematics\newline University of Shahrekord \newline Shahrekord, 88186-34141, Iran\newline School of Mathematics\newline Institute for Research in Fundamental Sciences (IPM)\newline P.O. Box: 19395-5746, Tehran, Iran}
        \email{ aminih$\underline{~}$a@yahoo.com}
\thanks{$^\dag$ The author was partially supported by a grant from IPM (No. 92470412).}
        \author{Szil\'ard L\'aszl\'o$^*$}
             \address{ Department of Mathematics, \newline Technical University of Cluj-Napoca,\newline
Str. Memorandumului nr. 28, 400114 Cluj-Napoca, Romania.}
\email{laszlosziszi@yahoo.com}
             \thanks{$^*$This work was supported by a grant of the Romanian Ministry of Education, CNCS - UEFISCDI, project number PN-II-RU-PD-2012-3 -0166.}

\begin{abstract}
In this paper we obtain some  existence result of solution for general variational inequalities. As applications several coincidence and fixed point results are provided.
\newline

\emph{Keywords:}{ General variational inequality; Coincidence point; Monotone operator}

\emph{MSC:} 46A50; 47H04; 54A20; 58C07 
\end{abstract}

\maketitle
\section{Introduction}

The variational inequality theory has its origin in the works   of Stampacchia (see \cite{S}) and Fichera (see \cite{F}). This theory provides us very powerful techniques for studying problems arising in various  branches of mathematics but also in mechanics,  transportation, economics equilibrium or contact problems in elasticity. For instance, the moving boundary value problem can be characterized by a class of variational inequalities, the traffic assignment problem is a variational inequality problem, the free boundary value problem can be studied effectively in the framework of variational inequalities (see \cite{Bai, Ber-Gaf, Daf}).

During the last decades, the variational inequality theory has imposed itself, due to its importance, and has been  subject of a  dynamical evolution reflected in a number of generalizations,  studied and applied in various directions (see, for instance, \cite{BL,Fe,F,KS,L,N1,S}).

General variational inequalities, which were introduced and studied by Noor, can be used to study a wide class of problems including unilateral, moving boundary, obstacle, free boundary and equilibrium problems arising in various areas of pure and applied sciences (see \cite{N1}).

Let $H$ be a Hilbert space and let $C$ be a nonempty, closed and convex subset of $H$. Noor considered the following problem, called general variational inequality (see \cite{N1, Nor1,Noor}). For two continuous mappings $T,g:H\To H$, find $u\in g^{-1}(C)$ such that
$$\la T(u),g(v)-g(u)\ra \ge 0,\mbox{  for all }v\in g^{-1}(C).$$

Noor remarked (see \cite{N1}), that  if  $C^* =\{u\in H| \la u,v\ra\ge0,\,\mbox{\,for all\,}\, v\in C\}$ is a polar cone of the convex cone $C\subseteq H$,
then the general variational inequality is equivalent to finding $u\in H$, such that
$g(u)\in C$, $T(u)\in C^*$,  $\la T(u),g(u)\ra=0,$ which is known as the general nonlinear complementarity problem.

If $C=H$, then the general variational inequality problem is equivalent to finding $u\in H$, such that
$\la T(u),g(v)\ra = 0, \mbox{  for all }g(v)\in H$ which is known as the weak formulation of the boundary value problem.

For a concrete third-order obstacle boundary value problem, that may be characterized by a general variational inequality see \cite{Nor2,No}.

The problem of general variational inequalities has been extended by Noor to the case when the operators involved are set-valued.

Let $C(H)$ be the family of all nonempty compact subsets of $H.$ Let $T:H\To C(H)$ be a set-valued operator,
and let $g:H\To H$ be a single-valued operator. Let $K$ be a nonempty, closed and convex set in $H.$
Consider the problem of finding $x\in H,\, g(x)\in K,\,u\in T(x)$ such that
\begin{equation*}
\la u,g(y)-g(x)\ra \ge0,\mbox{  for all } g(y)\in K.
\end{equation*}
This problem is called a multivalued variational inequality. It has been shown, that a wide class of multivalued odd-order and nonsymmetric,
free, obstacle, moving equilibrium and optimization problems arising in pure and applied sciences can be studied via the multivalued variational
inequality, ( see \cite{Noor3}).

While existence results of the solution for the classical Stampacchia variational inequalities were abundant in the last years (see for instance \cite{MR}), this is not the case of general variational inequality, respectively of multivalued variational inequality. Some variants of  the general variational inequality problem, respectively the multivalued variational inequality problem (see \cite{Y}) have also been  studied in \cite{L,L1} and \cite{L2} in a Banach space context.  In these papers several existence results of the solution for these problems were established in the case the operator $g$ is of type ql and the operators involved possess some continuity properties.  Recall that the operator $g:D\subseteq X\To Y$ is of type ql (see \cite{L}), if for every $x,y\in D,$ and every $z\in [x,y]\cap D,$ one has $g(z)\in[g(x),g(y)],$ where  $X$ and $Y$ are real linear spaces and $[x,y]$ denotes the closed line segment with the endpoints $x$ respectively, $y.$  Moreover, it has been shown by examples that the existence results of solution for these problems, obtained in the papers mentioned above, fail outside of the class of ql type operators.

  In this paper we  obtain some existence results of the solution for general variational inequalities without assuming that the operators involved are of type ql. We do not assume any  continuity property of the operators involved, instead we work with some sequential conditions imposed on these operators. We use these results to obtain some new coincidence point results in Hilbert spaces.

 In what follows let $X$ be a real Banach space and let $X^*$ be the topological dual of $X.$ We denote by $\la x^*,x\ra$  the value of the linear and continuous functional $x^*\in X^*$ in $x\in X.$ Consider the set $K\subseteq X,$ and let  $A:K\To X^*$ and $a:K\To X$ be two given operators.

We deal with the following formulation of the general variational inequality problem, denoted by  $VI_S(A,a,K)$ (see \cite{L}). Find an element $x\in K,$ such that
$$\la A(x),a(y)-a(x)\ra\geq 0,\mbox{ for all }y\in K.$$

We denote by $\id_K$ the identity mapping on $K$, that is, $$\id\nolimits_{K}:K\To K,\, \id\nolimits_K(x)=x,\mbox{ for all } x\in K.$$

Obviously, when $a\equiv \id_K,$ then $VI_S(A,a,K)$  reduces to Stampacchia variational inequality, $VI_S(A,K)$ (see \cite{S}), that is, find $x\in K$ such that
$$\la A(x),y-x\ra\geq 0,\mbox{ for all }y\in K.$$

The outline of the paper is the following. In the next section  we obtain several existence result of the solution for general variational inequalities.  These results  will be used  in Section 3 for providing some unknown coincidence point results in Hilbert spaces.  Also here, as corollaries, several fixed point theorems are obtained.

\section{Solution existence}
In the sequel let $X$ be a real Banach space and let $X^*$ be the topological dual of $X.$  Recall that the operator $T:D\subseteq X\To X^*$ is called weak to $\|\cdot\|$-sequentially continuous at $x\in D$, if for every sequence $\big(x_n\big)\subseteq D$ that converges weakly to $x\in D,$  the sequence $\big(T(x_n)\big)\subseteq X^*$ converges to $T(x)\in X^*$ in the topology of the norm of $X^*.$ We say that $T$ is  weak to $\|\cdot\|$-sequentially continuous on  $D\subseteq X,$ if has this property at every point  $x\in D$. Let us mention, that the compact operators between two Banach spaces, in particular  the Fredholm integral operators, have this continuity property.

 The operator $T:D\subseteq X\To X$ is called weak to weak-sequentially continuous at $x\in D$, if for every sequence $\big(x_n\big)\subseteq D$ that converges weakly to $x\in D,$  the sequence $\big(T(x_n)\big)\subseteq X$ converges weakly to $T(x)\in X.$ We say that $T$ is  weak to weak-sequentially continuous on  $D\subseteq X,$ if has this property at every point  $x\in D$.

The following result was established in \cite{L}.

\begin{prop}\label{p2.1}(Corollary 4.1 \cite{L}) Let  $A:K\subseteq X\To X^*$  be a given operator. If $A$ is weak to $\|\cdot\|$-sequentially continuous and $K$ is weakly compact and convex, then Stampacchia variational inequality, $VI_S(A,K)$, admits solutions.
\end{prop}

Now we are able to provide a result concerning on existence of the solution of $VI_S(A,a,K)$. We need the following concept. Let $Z$ and $Y$ be two arbitrary sets. Recall that the inverse of a mapping $f:Z\To Y$ is defined as the set-valued mapping $f^{-1}:Y\rightrightarrows Z,\, f^{-1}(y)=\{z\in Z:f(z)=y\}.$  A single valued selection of the set-valued map $F:Z\rightrightarrows Y$ is the single valued map $f:Z\To Y$ satisfying $f(z)\in F(z)$ for all $z\in Z.$

\begin{thm}\label{t3.1} Let $K\subseteq X$ and let $A:K\To X^*$ and $a:K\To X$ be two given operators. Assume that $a(K)$ is weakly compact and  convex. Assume further, that for every sequence $\big(x_n\big)\subseteq K$ the following condition is satisfied:  if the sequence $\big(a(x_n)\big)\subseteq a(K)$ converges weakly to $a(x)\in a(K)$ then the sequence $\big(A(x_n)\big)\subseteq X^*$ is norm convergent to $A(x)\in X^*.$ Then $VI_{S}(A,a,K)$ admits solutions.
\end{thm}
\begin{proof} Consider  $b:a(K)\To K$  a single valued selection of $a^{-1}.$ Let $\big(u_n\big)\subseteq a(K)$ a weakly convergent sequence to $u\in X.$ Then due to the weak compactness of $a(K)$ we have $u\in a(K).$ We show that $(A\circ b)(u_n)\To (A\circ b)(u),\, n\To\infty.$
Since $\big(u_n\big)\subseteq a(K)$ there exists a sequence $\big(x_n\big)\subseteq K$ such that $u_n=a(x_n),\,n\in \mathbb{N} .$ Analogously $u=a(x)$ for some $x\in K.$ Note that $a(b(u_n))=u_n,\,n\in \mathbb{N} $ and $a(b(u))=u,$ hence the sequence $\big(a(b(u_n))\big)$ converges weakly to $a(b(u)).$
According to the hypothesis of the theorem  $$(A\circ b)(u_n)\To (A\circ b)(u),\, n\To\infty.$$

 Hence, the operator $A\circ b:a(K)\To X^*$ is weak to $\|\cdot\|$-sequentially continuous. According to Proposition \ref{p2.1} there exists $u\in a(K)$ such that,
 \[\la (A\circ b)(u),v-u\ra\geq 0,\mbox{ for all }v\in a(K).\]
 Since for every $y\in K$ there exists $v\in a(K)$ such that $a(y)=v$, we obtain the following general nonlinear variational inequality (see \cite{Noor4}),
 $$\la (A\circ b)(u), a(y)-u\ra \ge 0,\,\forall y\in K.$$
 Observe that $a(b(u))=u.$ Thus, \[\la A(b(u)), a(y)-a(b(u))\ra \ge 0,\mbox{ for all }y\in K,\] or equivalently, $b(u)\in K$ is a solution of $VI_S(A,a,K).$
\end{proof}

\begin{rem}\label{r3.1}\rm
The condition:  $\big(a(x_n)\big)\subseteq a(K)$ converges weakly to $a(x)\in a(K)$ then the sequence $\big(A(x_n)\big)\subseteq X^*$ is norm convergent to $A(x)\in X^*,$ in the hypothesis of Theorem \ref{t3.1} implies that $a^{-1}(a(x))\subseteq A^{-1}(A(x))$ for every $x\in K.$ Indeed, let $x\in K.$ Since $a(K)$ is weakly sequentially closed, there exists a sequence $\big(a(x_n)\big)\subseteq a(K)$ converging to $a(x)$ in the weak topology of $X$. But then the sequence $\big(A(x_n)\big)$ converges strongly to $A(x).$ Let $y\in a^{-1}(a(x)).$ Then $a(y)=a(x)$ hence $\big(A(x_n)\big)$ converges strongly to $A(y).$ Therefore $A(y)=A(x)$, hence $y\in A^{-1}(A(x)).$
\end{rem}

The next Corollary allows us to obtain the conclusion of Theorem \ref{t3.1}  in  conditions that can  more easily be verified.

\begin{cor}\label{c3.1} Assume that $K$ is weakly compact, $a$ is weak to weak-sequentially continuous and $a(K)$ is convex. Assume further,  that for every sequence $\big(x_n\big)\subseteq K$ the following condition holds:  if the sequence $\big(a(x_n)\big)\subseteq a(K)$ converges weakly to $a(x)\in a(K)$ then the sequence $\big(A(x_n)\big)\subseteq X^*$ is norm convergent to $A(x)\in X^*.$
Then $VI_S(A, a,K)$ admits solutions.
\end{cor}

\begin{proof} We show that $a(K)$ is weakly compact, and the conclusion follows from Theorem \ref{t3.1}.
By Eberlein-\u{S}mulian theorem, (see, for instance, \cite{FHHM}) $a(K)$ is weakly compact if and only if is weakly sequentially compact. To prove that $a(K)$ is weakly sequentially compact, let $\big(u_n\big)$ be an arbitrary sequence in $a(K)$. Then there exists a sequence $\big(x_n\big)\subseteq K$ such that $u_n=a(x_n),\,n\in \mathbb{N} .$ We show that $\big(a(x_n)\big)$ has a weakly convergent subsequence in $a(K)$. Since $\big(x_n\big)$ is a sequence in the weakly compact set $K$,  $\big(x_n\big)$ has a weakly convergent subsequence (note that by Eberlein-\u{S}mulian theorem $K$ is weakly sequentially compact, that is, every sequence in $K$ has a convergent subsequence). Let $\big(x_{n_i}\big)$ be a subsequence of $\big(x_n\big)$ that is weakly convergent to $x\in K$. Since $a$ is weak to weak sequentially continuous $\big(a(x_{n_i})\big)$  converges weakly to $a(x)$ and the proof is completed.
\end{proof}

 Recall that the operator $T:D\subseteq X\To {X^*}$ is called monotone (see\cite{B1,B2,M1,M2,Gia-Mau}) if for all $x,y\in D$ one has
$\la T(x)-T(y),x-y\ra\ge 0.$

  We say that $T$ is monotone relative to the operator $t:D\To X,$ if for all $x,y\in D,$ we have $\la T(x)-T(y),t(x)-t(y)\ra\ge 0$.

Obviously, if $t\equiv\id_D$ we obtain the definition of  monotonicity.  $T$ is called continuous on finite dimensional subspaces, if for every finite dimensional subspace $M\subseteq X$ the restriction of $T$ to ${D\cap M}$ is weak continuous, that is, for every sequence $\big(x_n\big)\subseteq D\cap M$ converging to $x\in M$ the sequence $\big(A(x_n)\big)\subseteq X^*$ converges to $A(x)$ in the weak topology of $X^*$ (see \cite{V}).

In what follows we present a well known existence results of solution for Stampacchia variational inequality $VI_S(A,K)$. The following classical result concerning on the existence of the solution of $VI(A,K),$ is due to Hartmann and Stampacchia (see \cite{HS}).

\begin{prop}\label{p2.2} Let $X$ be a reflexive Banach space, let $K$ be a weakly compact convex nonempty subset of $X$. If  $A:K\To X^*$ is a monotone operator, continuous on finite dimensional subspaces then $VI_S(A,K)$ admits solutions.
\end{prop}

In what follows we provide a result concerning on the existence of the solution of general variational inequalities.

\begin{thm}\label{t3.2} Assume that the Banach space $X$ is reflexive.  Let $A:K\subseteq X\To X^*$ be monotone relative to  $a:K\To X,$ where $a(K)$ is weakly compact and  convex. Assume further, that for every finite dimensional subset $L\subseteq a(K)$ and for every sequence $\big(x_n\big)\subseteq K,$ such that $a(x_n)\in L$ for every $n\in\mathbb{N},$ the following condition holds: if the sequence $\big(a(x_n)\big)\subseteq L$ converges to $a(x)\in a(K)$ then the sequence $\big(A(x_n)\big)\subseteq X^*$ is weakly convergent to $A(x)\in X^*.$  Then $VI_{S}(A,a,K)$ admits solutions.
\end{thm}
\begin{proof}
Consider  $b:a(K)\To K$  a single valued selection of $a^{-1}$ and let $u,v\in a(K).$ Then $\la (A\circ b)(u)-(A\circ b)(v),u-v\ra =\la A(x)-A(y),a(x)-a(y)\ra ,$ where $x=b(u),\,y=b(v).$ Since $A$ is monotone relative to $a$, we have $\la A(x)-A(y),a(x)-a(y)\ra \ge 0,$ hence, the operator  $A\circ b:a(K)\To X^*$ is monotone.

 Let  $M$ be a finite dimensional subspace of $X$ and let $L=M\cap a(K).$ Let   $\big(u_n\big)\subseteq L$ be a  sequence  convergent to $u\in a(K).$ Since $M$ is finite dimensional subspace it is closed. Hence, according to the weak compactness of $a(K)$ we get that   $u\in L.$ We have to show that the sequence $\big((A\circ b)(u_n)\big)\subseteq X^*$ converges to $(A\circ b)(u)\in X^*$ in the weak topology of $X^*.$
Since $\big(u_n\big)\subseteq a(K)$ there exists $\big(x_n\big)\subseteq K$ such that $u_n=a(x_n),\,n\in\mathbb{N} .$ Analogously $u=a(x)$ for some $x\in K.$  Since $b:a(K)\To K$ is a single valued selection of $a^{-1},$  observe that $a(b(u_n))=u_n\in L,\,n\in\mathbb{N} $ and $a(b(u))=u\in L.$ Hence $\big(a(b(u_n))\big)$ converges to $a(b(u)).$

According to the hypothesis of the theorem  the sequence $\big(A( b(u_n))\big)\subseteq X^*$ converges weakly to $A(b(u))\in X^*$ when $n\To\infty,$ which shows that $A\circ b$ is continuous on finite dimensional subspaces. Hence, according to Proposition \ref{p2.2} there exists $u\in a(K)$ such that,
 \[\la (A\circ b)(u),v-u\ra\geq 0,\mbox{ for all }v\in a(K).\]
 Since for every $y\in K$ there exists $v\in a(K)$ such that $a(y)=v$, we obtain $$\la (A\circ b)(u), a(y)-u\ra \ge 0,\,\forall y\in K.$$
 Observe that $a(b(u))=u.$ Thus, \[\la A(b(u)), a(y)-a(b(u))\ra \ge 0,\mbox{ for all }y\in K,\] or equivalently, $b(u)\in K$ is a solution of $VI_S(A,a,K).$
\end{proof}

\begin{rem}\label{r3.2}\rm Observe, that  if $K$ is weakly compact and $a$ is weak to weak-sequentially continuous then, according to the proof of Corollary \ref{c3.1}, $a(K)$ is weakly compact, hence we have the following corollary.
\end{rem}

\begin{cor}\label{c3.2}
Let the Banach space $X$ be reflexive.  Assume that $K$ is weakly compact, $a(K)$ is convex and $a$ is weak to weak-sequentially continuous. Let $A$ be monotone relative to  $a$ and assume further, that for every finite dimensional subset $L\subseteq a(K)$ and for every sequence $\big(x_n\big)\subseteq K,$ such that $a(x_n)\in L$ for every $n\in\mathbb{N},$ the following condition holds: if the sequence $\big(a(x_n)\big)\subseteq L$ converges to $a(x)\in a(K)$ then the sequence $\big(A(x_n)\big)\subseteq X^*$ is weakly convergent to $A(x)\in X^*.$  Then $VI_{S}(A,a,K)$ admits solutions.
\end{cor}

\section{Coincidence points}
In this section we obtain several coincidence point results for two given mappings by using the existence results of the solution for general variational inequalities established in the previous section. Let $X$ and $Y$ be two arbitrary sets and let $f,g \colon X \To Y$ be two given mappings. Recall that a point $x\in X$ is a coincidence point of $f$ and $g$ if $f(x) = g(x).$ If $X=Y$ and $g\equiv \id_X$ then a coincidence point $x\in X$ of $f$ and $g$ is called a fixed point of $f$, that is $f(x)=x.$ A considerable number of  problems concerning on  the existence of the solution of nonlinear inequalities, arising in different areas of mathematics, can be treated as a coincidence point or fixed point problem (see, for instance, \cite{NS-MAT}).

 In the sequel $H$ denotes a real Hilbert space identified with its dual.

The following coincidence point result is an easy consequence of Theorem \ref{t3.1}.

\begin{thm}\label{t4.1} Let $K\subseteq H$ and let $f,g:K\To H$ be two given mappings. Assume that $f(K)\subseteq g(K)$ and $g(K)$ is weakly compact and convex. Assume further, that for every sequence $\big(x_n\big)\subseteq K$ the following condition holds: if the sequence $\big(g(x_n)\big)\subseteq g(K)$ converges weakly to $g(x)\in g(K)$ then the sequence $\big(g(x_n)-f(x_n)\big)\subseteq H$ converges to $g(x)-f(x)\in H$ in the topology of the norm of $H$. Then $f$ and $g$ have a coincidence point.
\end{thm}
\begin{proof}  It can be easily verified that the operators $A:K\To H, \,A(x)=g(x)-f(x)$ and $a:K\To H,\,a(x)=g(x)$ satisfy the assumptions of the hypothesis of Theorem \ref{t3.1}. Hence, $VI_S(g-f,g,K)$ admits solutions.

Let  $x\in K$ be a solution of $VI_S(g-f,g,K)$. Then we have  $\la g(x)-f(x),g(y)-g(x)\ra \ge 0$ for all $y\in K.$ Since $f(K)\subseteq g(K)$ choose $y\in K$ such that $g(y)=f(x).$ Then $\la g(x)-f(x),f(x)-g(x)\ra \ge 0$, or equivalently $-\|f(x)-g(x)\|^2\ge 0$, which leads to $f(x)=g(x).$
\end{proof}

\begin{cor}\label{c4.1}
Let $K \subseteq H$ be a weakly compact set, let $f : K \To H$ and $g : K \To H$ be two mappings, such that $g$ is weak to weak-sequentially continuous and $g(K)$ is convex. Assume that for every sequence $\big(x_n\big)\subseteq K$ the following condition holds:  if the sequence $\big(g(x_n)\big)\subseteq g(K)$ converges weakly to $g(x)\in g(K)$ then the sequence  $\big(g(x_n)-f(x_n)\big)\subseteq H$ is norm convergent to $g(x)-f(x)\in H.$ If $f(K) \subseteq g(K)$ then  $f$ and $g$ have a coincidence point.
\end{cor}
\begin{proof} According to the proof of Corollary \ref{c3.1}, $g(K)$ is weakly compact. The conclusion follows from Theorem \ref{t4.1}.
\end{proof}

The next corollary provides sufficient conditions for the existence of a fixed point of a given mapping.

\begin{cor}\label{c4.2} Let $K \subseteq H$ be a weakly compact and convex set, let $f : K \To H$ be a given mapping such that $f(K) \subseteq K.$ Assume that for every sequence $\big(x_n\big)\subseteq K$ the following condition holds:  if the sequence $\big(x_n\big)\subseteq K$ converges weakly to $x\in K$ then the sequence  $\big(x_n-f(x_n)\big)\subseteq H$ is norm convergent to $x-f(x)\in H.$ Then $f$ has a fixed point.
\end{cor}
\begin{proof} The conclusion follows from Theorem \ref{t4.1} by taking $g\equiv \id_K.$
\end{proof}

In finite dimensional Hilbert spaces the weak topology and the topology of the norm are equivalent, hence we have the following corollaries.

\begin{cor}\label{c4.3} Let $K\subseteq \mathbb{R} ^n$ and let $f,g:K\To \mathbb{R} ^n$ be two given mappings. Assume that $f(K)\subseteq g(K)$ and $g(K)$ is  compact and convex. Assume further, that for every sequence $\big(x_n\big)\subseteq K$ the following condition holds:  if the sequence $\big(g(x_n)\big)\subseteq g(K)$ converges  to $g(x)\in g(K)$ then the sequence $\big(f(x_n)\big)\subseteq \mathbb{R} ^n$ converges to $f(x)\in\mathbb{R}^n.$ Then $f$ and $g$ have a coincidence point.
\end{cor}
 The next result can be viewed as  a coincidence point version of Brouwer fixed point theorem.

\begin{cor}\label{c4.4}
Let $K \subseteq \mathbb{R}^n$ be a compact set, let $f ,g: K \To \mathbb{R}^n$ be two given mappings, such that $g$ is continuous, $g(K)$ is convex and  $f(K) \subseteq g(K)$. Assume that for every sequence $\big(x_n\big)\subseteq K$ the following condition holds:  if the sequence $\big(g(x_n)\big)$ converges to $g(x)\in g(K)$ then the sequence $\big(f(x_n)\big)$ converges to $f(x)\in\mathbb{R}^n.$
  Then there exists $x_0 \in K$ such that $f(x_0) = g(x_0)$.
\end{cor}

The following coincidence point result is obtained via Theorem \ref{t3.2}.

\begin{thm}\label{t4.2} Let $K\subseteq H$ and let $f,g:K\To H$ be two given mappings, such that
$$\|g(x)-g(y)\|^2\ge \la f(x)-f(y),g(x)-g(y)\ra \mbox{ for all }x,y\in K.$$  Assume that $f(K)\subseteq g(K)$ and $g(K)$ is weakly compact and convex. Assume further, that for every finite dimensional subset $L\subseteq g(K)$ and for every sequence $\big(x_n\big)\subseteq K,$ such that $g(x_n)\in L$ for every $n\in\mathbb{N},$ the following condition holds: if the sequence $\big(g(x_n)\big)\subseteq L$ converges to $g(x)\in g(K)$ then the sequence $\big(f(x_n)\big)\subseteq H$ converges to $f(x)\in  H$ in the weak topology of  $H$. Then $f$ and $g$ have a coincidence point.
\end{thm}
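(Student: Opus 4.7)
The plan is to apply Theorem \ref{t3.2} to the operators $A:K\To H$ and $a:K\To H$ defined by $A(x)=g(x)-f(x)$ and $a(x)=g(x)$, identifying $H$ with $H^*$ via the Riesz isomorphism, and then exploit the containment $f(K)\subseteq g(K)$ to convert a solution of $VI_S(g-f,g,K)$ into a coincidence point. This parallels the strategy used for Theorem \ref{t4.1}, but the monotonicity-type hypothesis is now essential because Theorem \ref{t3.2} requires $A$ to be monotone relative to $a$ rather than demanding norm-continuity of $A$.

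The first step is to verify the monotonicity of $A$ relative to $a$. For $x,y\in K$ we compute
\[
\la A(x)-A(y),a(x)-a(y)\ra=\|g(x)-g(y)\|^2-\la f(x)-f(y),g(x)-g(y)\ra,
\]
which is nonnegative precisely by the standing assumption of the theorem. The second step is the finite-dimensional continuity requirement: given a finite-dimensional $L\subseteq g(K)$ and a sequence $(x_n)\subseteq K$ with $g(x_n)\in L$ and $g(x_n)\to g(x)$ (necessarily in norm inside $L$, hence weakly), the hypothesis yields $f(x_n)\rightharpoonup f(x)$, and therefore $A(x_n)=g(x_n)-f(x_n)\rightharpoonup g(x)-f(x)=A(x)$. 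Weak compactness and convexity of $a(K)=g(K)$ are given, and since every Hilbert space is reflexive, all hypotheses of Theorem \ref{t3.2} are in place.

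Applying Theorem \ref{t3.2} produces an $x\in K$ with
\[
\la g(x)-f(x),g(y)-g(x)\ra\ge 0\quad\text{for every }y\in K.
\]
Since $f(K)\subseteq g(K)$, there exists $y_0\in K$ with $g(y_0)=f(x)$. Inserting this choice gives $\la g(x)-f(x),f(x)-g(x)\ra\ge 0$, i.e., $-\|g(x)-f(x)\|^2\ge 0$, forcing $f(x)=g(x)$.

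The only subtle point is checking that our chosen $A$ truly satisfies the relative monotonicity and the finite-dimensional weak continuity required by Theorem \ref{t3.2}; both reduce directly to the assumptions already made, so I do not expect genuine difficulty. The proof is, in effect, a templated application of Theorem \ref{t3.2} together with the now-standard trick of picking $y_0$ with $g(y_0)=f(x)$ to produce the norm-squared cancellation.
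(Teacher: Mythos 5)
Your proof is correct and follows essentially the same route as the paper: both take $A=g-f$, $a=g$, verify relative monotonicity from the standing inequality and the finite-dimensional weak continuity from the sequential hypothesis, invoke Theorem \ref{t3.2}, and conclude by choosing $y$ with $g(y)=f(x)$ to force $-\|f(x)-g(x)\|^2\ge 0$. No gaps.
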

\begin{proof} We show that the operators $A:K\To H, \,A(x)=g(x)-f(x)$ and $a:K\To H,\,a(x)=g(x)$ satisfy the assumptions in the hypothesis of Theorem \ref{t3.2}.

Let $L\subseteq g(K)$ be a finite dimensional subset . It is obvious that if the sequence $\big(g(x_n)\big)\subseteq L$ converges to $g(x)\in g(K)$  and the sequence $\big(f(x_n)\big)\subseteq H$ converges to $f(x)\in H$ in the  weak topology of  $H$ then the sequence $\big(g(x_n)-f(x_n)\big)\subseteq H$ converges to $g(x)-f(x)\in H$ in the weak topology of $H$.

From  $\|g(x)-g(y)\|^2\ge \la f(x)-f(y),g(x)-g(y)\ra $ for all $x,y\in K$  we obtain  $\la g(x)-g(y),g(x)-g(y)\ra \ge \la f(x)-f(y),g(x)-g(y)\ra $ for all $x,y\in K,$ or equivalently  $$\la (g-f)(x)-(g-f)(y),g(x)-g(y)\ra \ge 0$$ for all $x,y\in K,$ which shows that $g-f$ is  monotone relative to $g$.

Hence, according to Theorem \ref{t3.2}, $VI_S(g-f,g,K)$ admits solutions.

Let  $x\in K$ be a solution of $VI_S(g-f,g,K)$. Then we have  $\la g(x)-f(x),g(y)-g(x)\ra \ge 0$ for all $y\in K.$ Since $f(K)\subseteq g(K)$ choose $y\in K$ such that $g(y)=f(x).$ Then $\la g(x)-f(x),f(x)-g(x)\ra \ge 0$, or equivalently $-\|f(x)-g(x)\|^2\ge 0$, which leads to $f(x)=g(x).$
\end{proof}

As an immediate consequence we obtain the following fixed point result.

\begin{cor}\label{c4.5} Let $K \subseteq H$ be a weakly compact and convex set, let $f : K \To H$ be a given mapping such that $f(K) \subseteq K.$ Assume that $f$ is continuous on finite dimensional subspaces and  $\|x-y\|^2\ge \la f(x)-f(y),x-y\ra $ for all $x,y\in K.$ Then $f$ has a fixed point.
\end{cor}
\begin{proof} The conclusion follows from Theorem \ref{t4.2} by taking $g\equiv \id_K.$
\end{proof}

\begin{rem}\label{r4.1}\rm Note that if $f$ is $g-$nonexpansive, that is $\|f(x)-f(y)\|\le\|g(x)-g(y)\|$ for all $x,y\in K$, then the condition $\|g(x)-g(y)\|^2\ge \la f(x)-f(y),g(x)-g(y)\ra $ for all $x,y\in K$ in the hypothesis of Theorem \ref{t4.2} is satisfied, since  $\mbox{ for all }x,y\in K$ we have
$$\|g(x)-g(y)\|^2\ge \|f(x)-f(y)\|\|g(x)-g(y)\|\ge \la f(x)-f(y),g(x)-g(y)\ra.$$
 It particular, if $f$ is nonexpansive,   then the condition  $\|x-y\|^2\ge \la f(x)-f(y),x-y\ra $ for all $x,y\in K$ in Corollary \ref{c4.5} is satisfied, since in this case we have
$$\|x-y\|^2\ge \|f(x)-f(y)\|\|x-y\|\ge \la f(x)-f(y),x-y\ra \mbox{ for all }x,y\in K.$$
\end{rem}

\textbf{Acknowledgements.} The authors would like to express sincere thanks  to anonymous referees for their helpful comments and suggestions which led to improvement of the originally submitted version of this work.

\end{document}